\documentclass{article}


\usepackage{amsthm,amssymb,amsmath}
\usepackage{a4wide}
\usepackage{cite}


\newtheorem{theorem}{Theorem}
\newtheorem{corollary}[theorem]{Corollary}
\newtheorem{proposition}[theorem]{Proposition}
\newtheorem{definition}[theorem]{Definition}
\newtheorem{example}[theorem]{Example}
\newtheorem{remark}[theorem]{Remark}


\begin{document}

\title{A Conformable Fractional Calculus on Arbitrary Time Scales\thanks{This 
is a preprint of a paper whose final and definite form is in \emph{Journal 
of King Saud University (Science)}, ISSN 1018-3647. Paper submitted 14/April/2015; 
revised 12/May/2015; accepted for publication 12/May/2015.}}

\author{Nadia Benkhettou$^1$\\ {\small \texttt{benkhettou$_{-}$na@yahoo.fr}}
\and Salima Hassani$^1$\\ {\small \texttt{salima$_{-}$hassani@yahoo.fr}}
\and Delfim F. M. Torres$^2$\thanks{Corresponding author.
Tel: +351 234370668; Fax: +351 234370066;  Email: delfim@ua.pt}\\ {\small \texttt{delfim@ua.pt}}}

\date{$^1$Laboratoire de Bio-Math\'{e}matiques, Universit\'{e} de Sidi Bel-Abb\`{e}s,\\
B.P. 89, 22000 Sidi Bel-Abb\`{e}s, Algerie\\[0.3cm]
$^2$\text{Center for Research and Development in Mathematics and Applications (CIDMA)},\\
Department of Mathematics, University of Aveiro, 3810-193 Aveiro, Portugal}

\maketitle


\begin{abstract}
A conformable time-scale fractional calculus of order $\alpha \in ]0,1]$
is introduced. The basic tools for fractional differentiation and fractional
integration are then developed. The Hilger time-scale calculus is obtained
as a particular case, by choosing $\alpha = 1$.

\smallskip

\noindent \textbf{Keywords:} fractional calculus,
conformable operators, calculus on time scales.

\smallskip

\noindent \textbf{2010 Mathematics Subject Classification:} 26A33, 26E70.
\end{abstract}


\section{Introduction}

Fractional calculus is nowadays one of the most intensively
developing areas of mathematical analysis \cite{MyID:310,MR2736622,Tarasov},
including several definitions of fractional operators like Riemann--Liouville,
Caputo, and Gr\"{u}nwald--Letnikov.
Operators for fractional differentiation and integration
have been used in various fields, such as signal processing, hydraulics of dams,
temperature field problem in oil strata, diffusion problems,
and waves in liquids and gases \cite{BBT,Boya-Scher,Sch-Wy}.
Here we introduce the notion of conformable fractional
derivative on a time scale $\mathbb{T}$. The notion of
conformable fractional derivative in $\mathbb{T}=[0, \infty)$
is a recent one: it was introduced in \cite{KhHYS},
then developed in \cite{Abdeljawad}, and is currently under
intensive investigations \cite{MR3326681}.
In all these works, however, only the case
$\mathbb{T}=[0, \infty)$ is treated, providing a natural extension
of the usual derivative. In contrast, here we introduce
the conformable natural extension of the time-scale derivative.
A time scale $\mathbb{T}$ is an arbitrary nonempty closed subset of $\mathbb{R}$.
It serves as a model of time. The calculus on time scales was
initiated by Aulbach and Hilger in 1988 \cite{MR1062633}, in order
to unify and generalize continuous and discrete analysis \cite{H2,H1}.
It has a tremendous potential for applications and has
recently received much attention \cite{ABRP}. The reader interested
on the subject of time scales is referred  to the books \cite{BP,BP1}.

The paper is organized as follows. In Section~\ref{sub:sec:FD},
the conformable fractional derivative for functions defined
on arbitrary time scales is introduced, and the respective conformable
fractional differential calculus developed. Then, in Section~\ref{sub:sec:FI},
we introduce the notion of conformable fractional integral on time scales
(the $\alpha$-fractional integral) and investigate some of its basic properties.
We end with Section~\ref{sec:conc} of conclusion.


\section{Conformable Fractional Differentiation}
\label{sub:sec:FD}

Let $\mathbb{T}$ be a time scale, $t\in \mathbb{T}$,
and $\delta >0$. We define the $\delta$-neighborhood of $t$
as $\mathcal{V}_t :=\left] t-\delta ,t+\delta\right[ \cap \mathbb{T}$.
We begin by introducing a new notion:
the conformable fractional derivative of order $\alpha \in ]0,1]$
for functions defined on arbitrary time scales.

\begin{definition}
\label{def:fd:ts}
Let $f:\mathbb{T}\rightarrow \mathbb{R}$, $t\in \mathbb{T}^{\kappa }$,
and $\alpha \in ]0,1]$. For $t>0$, we define
$T_\alpha(f)(t)$ to be the number (provided it exists) with the property
that, given any $\epsilon >0$, there is a $\delta$-neighborhood
$\mathcal{V}_t\subset \mathbb{T}$ of $t$, $\delta > 0$, such that
$\left \vert \left[ f(\sigma (t))-f(s)\right]t^{1-\alpha}
-T_\alpha(f)(t)\left[ \sigma(t)-s\right]\right \vert
\leq \epsilon \left \vert \sigma
(t)-s\right \vert$
for all $s\in \mathcal{V}_t$. We call
$T_\alpha(f)(t)$ the conformable fractional derivative of $f$
of order $\alpha $ at $t$, and we define the conformable
fractional derivative at 0 as
$T_\alpha(f)(0)=\displaystyle\lim_{t\rightarrow 0^+} T_\alpha(f)(t)$.
\end{definition}

\begin{remark}
If $\alpha = 1$, then we obtain from Definition~\ref{def:fd:ts}
the delta derivative of time scales.
The conformable fractional derivative of order zero
is defined by the identity operator: $T_0(f) := f$.
\end{remark}

\begin{remark}
Along the work, we also use the notation
$\left(f(t)\right)^{(\alpha)} = T_\alpha(f)(t)$.
\end{remark}

The next theorem provides some useful relationships
concerning the conformable fractional derivative on time scales
introduced in Definition~\ref{def:fd:ts}.

\begin{theorem}
\label{T1}
Let $\alpha \in ]0,1]$ and $\mathbb{T}$ be a time scale.
Assume $f:\mathbb{T}\rightarrow \mathbb{R}$ and let
$t\in \mathbb{T}^{\kappa }$. The following properties hold.
(i) If $f$ is conformal fractional differentiable of order
$\alpha$ at $t>0$, then $f$ is continuous at $t$.
(ii) If $f$ is continuous at $t$ and $t$ is right-scattered, then $f$
is conformable fractional differentiable of order $\alpha$ at $t$ with
\begin{equation}
\label{eq:delf:dr}
T_\alpha(f)(t)=\frac{f(\sigma (t))-f(t)}{\mu (t)}t^{1-\alpha }.
\end{equation}
(iii) If $t$ is right-dense, then $f$ is conformable fractional
differentiable of order $\alpha$ at $t$ if, and only if, the limit
$\lim_{s\rightarrow t}\frac{f(t)-f(s)}{(t-s)}t^{1-\alpha}$
exists as a finite number. In this case,
\begin{equation}
\label{eq:rhsof}
T_{\alpha}(f)(t)=\lim_{s\rightarrow t}\frac{f(t)-f(s)}{t-s}t^{1-\alpha }.
\end{equation}
(iv) If $f$ is fractional differentiable of order $\alpha$ at $t$,
then $f(\sigma (t))=f(t)+(\mu (t))t^{\alpha-1 }T_{\alpha}(f)(t)$.
\end{theorem}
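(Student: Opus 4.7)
The plan is to mimic the proof of Theorem 1.16 in Bohner and Peterson's book on time scales, treating the extra factor $t^{1-\alpha}$ as a fixed nonzero multiplicative constant (since we fix $t>0$). The defining inequality
\[
\bigl|[f(\sigma(t))-f(s)]t^{1-\alpha}-T_\alpha(f)(t)[\sigma(t)-s]\bigr|\leq \epsilon |\sigma(t)-s|
\]
is the classical delta-derivative inequality for the function $f$ rescaled by $t^{1-\alpha}$, so each of (i)--(iv) should reduce to the corresponding delta-derivative statement, up to dividing by $t^{1-\alpha}>0$.

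For (i), I would apply the defining inequality twice: once with a generic $s\in\mathcal{V}_t$, and once with $s=t$ (allowed because $t\in\mathcal{V}_t$). Subtracting the two estimates and using $\sigma(t)-s-\mu(t)=t-s$ yields
\[
[f(t)-f(s)]t^{1-\alpha}=T_\alpha(f)(t)(t-s)+R(s),
\]
where $|R(s)|\le \epsilon\bigl(|\sigma(t)-s|+\mu(t)\bigr)$. Given a target tolerance $\eta>0$, I would first pick $\epsilon$ small relative to $\mu(t)+1$ (to absorb the $\epsilon\mu(t)$ term), then shrink $\delta$ further so that $(|T_\alpha(f)(t)|+\epsilon)|t-s|$ is also small. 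Dividing by the nonzero factor $t^{1-\alpha}$ gives $|f(t)-f(s)|<\eta$, i.e. continuity at $t$. This $\epsilon$-juggling across two scales (the $|t-s|$ scale and the fixed $\mu(t)$ scale) is the main technical obstacle, exactly as in the classical proof.

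For (ii), assuming $t$ is right-scattered, I would choose $\delta<\mu(t)$ so that $\mathcal{V}_t$ contains no point of $\mathbb{T}$ strictly between $t$ and $\sigma(t)$; then $s=t$ is admissible and the inequality at $s=t$ gives $|[f(\sigma(t))-f(t)]t^{1-\alpha}-T_\alpha(f)(t)\mu(t)|\le\epsilon\mu(t)$ for every $\epsilon>0$, which forces \eqref{eq:delf:dr}. Conversely, to show that this value really is a conformable fractional derivative, I would take $T_\alpha(f)(t)$ defined by \eqref{eq:delf:dr}, expand the left-hand side of the defining inequality using $f(\sigma(t))-f(s)=[f(\sigma(t))-f(t)]+[f(t)-f(s)]$ and $\sigma(t)-s=\mu(t)+(t-s)$, and use continuity of $f$ at $t$ together with $|\sigma(t)-s|\ge \mu(t)-\delta>0$ to make the remainder smaller than $\epsilon|\sigma(t)-s|$ by taking $\delta$ small enough.

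For (iii), when $t$ is right-dense we have $\sigma(t)=t$ and $\mu(t)=0$, so the defining inequality for $s\neq t$ is equivalent, after dividing by $|t-s|$, to
\[
\Bigl|\tfrac{f(t)-f(s)}{t-s}\,t^{1-\alpha}-T_\alpha(f)(t)\Bigr|\le \epsilon,
\]
which is exactly the $\epsilon$--$\delta$ definition of the limit in \eqref{eq:rhsof}; the equivalence is then a tautology. Finally, (iv) splits into two cases: if $t$ is right-dense, both sides equal $f(t)$ because $\mu(t)=0$ and $\sigma(t)=t$; if $t$ is right-scattered, part (i) provides continuity, part (ii) applies, and rearranging \eqref{eq:delf:dr} as $f(\sigma(t))-f(t)=\mu(t)t^{\alpha-1}T_\alpha(f)(t)$ yields the claim.
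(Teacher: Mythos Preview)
Your proposal is correct and follows essentially the same approach as the paper: both adapt the standard Bohner--Peterson proof of Theorem~1.16 by carrying the fixed nonzero factor $t^{1-\alpha}$ through each $\epsilon$--$\delta$ estimate. The only cosmetic differences are that for (ii) the paper passes through the limit $\lim_{s\to t}\frac{f(\sigma(t))-f(s)}{\sigma(t)-s}t^{1-\alpha}$ rather than decomposing $f(\sigma(t))-f(s)$ and $\sigma(t)-s$ directly, and your (i) makes the use of $s=t$ explicit where the paper's argument is less tidy; neither difference is substantive.
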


\begin{proof}
$(i)$ Assume that $f$ is conformable fractional differentiable at $t$. Then,
there exists a neighborhood $\mathcal{V}_t$ of $t$ such that
$\left \vert \left[ f(\sigma (t))-f(s)\right] t^{1-\alpha }
-T_{\alpha}(f)(t)\left[ \sigma(t)-s\right] \right \vert
\leq \epsilon \left \vert \sigma(t)-s\right \vert$
for $s\in \mathcal{V}_t$. Therefore,
$\left \vert f\left( t\right) -f\left( s\right) \right \vert
\leq \left\vert \left[ f(\sigma (t)-f(s)\right]
-T_\alpha(f)(t)\left[ \sigma (t)-s\right]t^{\alpha -1}\right \vert
+ \left \vert \left[ f(\sigma (t))-f(t)\right]\right\vert
+ \left\vert f^{(\alpha)}(t)\right\vert \left\vert \left[ \sigma (t)-s
\right]\right\vert \left\vert t^{\alpha}-1\right\vert$
for all $s \in \mathcal{V}_t \cap \left]t-\epsilon ,t+\epsilon \right[$
and, since $t$ is a right-dense point,
\begin{equation*}
\left \vert f\left( t\right) -f\left( s\right) \right \vert
\leq \left \vert
\left[ f^{\sigma }(t)-f(s)\right] -f^{(\alpha )}(t)\left[ \sigma (t)
-s\right]^{\alpha }\right \vert +\left \vert f^{(\alpha )}(t)\left[
t-s\right]^{\alpha}\right\vert \\
\leq \epsilon \delta + \left \vert t^{\alpha-1}\right
\vert \left\vert T_\alpha(f)(t) \right\vert \delta.
\end{equation*}
Since $\delta \rightarrow 0$ when $s\rightarrow t$, and $t>0$,
it follows the continuity of $f$ at $t$.
$(ii)$ Assume that $f$ is continuous at $t$ and $t$ is right-scattered.
By continuity,
\begin{equation*}
\lim_{s\rightarrow t}\frac{f(\sigma(t))-f(s)}{\sigma (t)-s}t^{1-\alpha }
=\frac{f(\sigma(t))-f(t)}{\sigma (t)-t}t^{1-\alpha}
=\frac{f(\sigma(t))-f(t)}{\mu (t)}t^{1-\alpha }.
\end{equation*}
Hence, given $\epsilon >0$ and $\alpha \in ]0,1]$, there is a neighborhood
$\mathcal{V}_t$ of $t$ such that
\begin{equation*}
\left \vert \frac{f(\sigma)(t)-f(s)}{\sigma (t)-s}t^{1-\alpha }
-\frac{f(\sigma(t))-f(t)}{\mu (t)}t^{1-\alpha }\right \vert
\leq \epsilon
\end{equation*}
for all $s\in \mathcal{V}_t$. It follows that
\begin{equation*}
\left \vert \left[ f(\sigma)(t)-f(s)\right]t^{1-\alpha }
-\frac{f(\sigma(t))-f(t)}{\mu (t)}t^{1-\alpha }(\sigma (t)-s)\right \vert
\leq \epsilon |\sigma (t)-s|
\end{equation*}
for all $s\in \mathcal{V}_t$. The desired equality \eqref{eq:delf:dr}
follows from Definition~\ref{def:fd:ts}.
$(iii)$ Assume that $f$ is conformable fractional differentiable
of order $\alpha $ at $t$ and $t$ is right-dense.
Let $\epsilon > 0$ be given. Since $f$ is conformable fractional
differentiable of order $\alpha $ at $t$, there is a neighborhood
$\mathcal{V}_t$ of $t$ such that
$\left \vert \lbrack f(\sigma (t))-f(s)]t^{1-\alpha}
-T_{\alpha }(f)(t)(\sigma(t)-s)\right \vert
\leq \epsilon |\sigma (t)-s|$
for all $s\in \mathcal{V}_t$. Because $\sigma (t)=t$,
\begin{equation*}
\left \vert \frac{f(t)-f(s)}{t-s}t^{1-\alpha}-T_{\alpha}(f)(t)\right \vert
\leq \epsilon
\end{equation*}
for all $s\in \mathcal{V}_t$, $s\neq t$. Therefore, we get the desired result
\eqref{eq:rhsof}.
Now, assume that the limit on the right-hand side of \eqref{eq:rhsof}
exists and is equal to $L$, and $t$ is right-dense.
Then, there exists $\mathcal{V}_t$ such that
$\left \vert (f(t)-f(s))t^{1-\alpha }-L(t-s)\right \vert \leq \epsilon |t-s|$
for all $s\in \mathcal{V}_t$. Because $t$ is right-dense,
\begin{equation*}
\left \vert (f(\sigma(t))-f(s))t^{1-\alpha }
-L(\sigma(t)-s)\right \vert \leq \epsilon |\sigma(t)-s|,
\end{equation*}
which lead us to the conclusion that $f$ is conformable fractional differentiable
of order $\alpha $ at $t$ and $T_{\alpha}(f)(t)=L$.
$(iv)$ If $t$ is right-dense, i.e., $\sigma (t)=t$, then $\mu (t)=0$ and
$f(\sigma (t))=f(t)=f(t)+\mu (t)T_{\alpha}(f)(t)t^{1-\alpha }$.
On the other hand, if $t$ is right-scattered, i.e., $\sigma (t)>t$, then by $(iii)$
\begin{equation*}
f(\sigma(t))=f(t)+\mu (t)t^{\alpha -1}\cdot \frac{f(\sigma (t))-f(t)}{\mu
(t)}t^{1-\alpha}=f(t)+(\mu (t))^{\alpha-1 }T_{\alpha}(f)(t),
\end{equation*}
and the proof is complete.
\end{proof}

\begin{remark}
In a time scale $\mathbb{T}$, due to the inherited topology of
the real numbers, a function $f$ is always continuous
at any isolated point $t \in \mathbb{T}$.
\end{remark}

\begin{example}
\label{ex:1}
Let $h>0$ and $\mathbb{T}=h\mathbb{Z}:=\{hk\ :\ k\in
\mathbb{Z}\}$. Then $\sigma (t)=t+ h$
and $\mu(t)=h$ for all $t\in \mathbb{T}$.
For function $f: t\in \mathbb{T}\mapsto t^2\in \mathbb{R}$ we have
$T_\alpha(f)(t)= \left(t^2\right)^{(\alpha)}=(2t+h)t^{1-\alpha}$.
\end{example}

\begin{example}
Let $q>1$ and $\mathbb{T}=\overline{q^{\mathbb{Z}}}
:=q^{\mathbb{Z}}\cup \{0\}$
with $q^{\mathbb{Z}}:=\{q^k \ :\ k\in\mathbb{Z} \}$.
In this time scale
$$
\sigma (t)=\left\{\begin{array}{ll}
qt & \text{ if } t\neq 0\\
0& \text{ if } t=0
\end{array}\right.
\quad \text{and} \quad
\mu(t) =\left\{
\begin{array}{ll}
(q-1)t & \text{ if }  t\neq 0\\
0 & \text{ if } t=0.
\end{array}\right.
$$
Here $0$ is a right-dense minimum and every other
point in $\mathbb{T}$ is isolated. Now consider the
square function $f$ of Example~\ref{ex:1}. It follows that
\begin{equation*}
\begin{split}
T_\alpha(f)(t) = \left(t^2\right)^{(\alpha)}
&=\left\{\begin{array}{ll}
(q+1)t^{2-\alpha} & \text{ if } t\neq 0\\
0& \text{ if }  t=0.
\end{array}\right.
\end{split}
\end{equation*}
\end{example}

\begin{example}
Let $q>1$ and $ \mathbb{T}=q^{{\mathbb{N}}_0}
:=\{q^n : n\in{\mathbb{N}}_0 \}$.
For all $t\in \mathbb{T}$ we have
$\sigma (t)=qt$ and $\mu(t)=(q-1)t$.
Let $f:t\in \mathbb{T}\mapsto \log(t)\in \mathbb{R}$. Then
$T_\alpha(f)(t) = \left(\log(t)\right)^{(\alpha)}
=\frac{\log(q)}{(q-1)t^{\alpha}}$
for all $t\in \mathbb{T}$.
\end{example}

\begin{proposition}
\label{E1:i}
If $f:\mathbb{T}\rightarrow \mathbb{R}$ is defined by $f(t)= c$
for all $t\in\mathbb{T}$, $c\in \mathbb{R}$, then
$T_\alpha(f)(t) = (c)^{(\alpha)} = 0$.
\end{proposition}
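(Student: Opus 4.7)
The plan is to apply Theorem~\ref{T1} directly, splitting into the two cases provided by parts (ii) and (iii), which cover every $t\in\mathbb{T}^\kappa$ with $t>0$. Since the constant function is continuous everywhere on $\mathbb{T}$, both hypotheses are immediately available, so we need only plug in $f\equiv c$ and observe that $f(\sigma(t))-f(t)=0$ and $f(t)-f(s)=0$.

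First I would handle a right-scattered point $t>0$. By Theorem~\ref{T1}(ii), continuity of $f$ at $t$ gives
\[
T_\alpha(f)(t)=\frac{f(\sigma(t))-f(t)}{\mu(t)}\,t^{1-\alpha}=\frac{c-c}{\mu(t)}\,t^{1-\alpha}=0.
\]
Next, for a right-dense point $t>0$, I would invoke Theorem~\ref{T1}(iii): the limit
\[
\lim_{s\to t}\frac{f(t)-f(s)}{t-s}\,t^{1-\alpha}=\lim_{s\to t}\frac{c-c}{t-s}\,t^{1-\alpha}=0
\]
exists and is zero, so $f$ is conformable fractional differentiable at $t$ with $T_\alpha(f)(t)=0$.

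Finally, to extend the conclusion to $t=0$ (when $0\in\mathbb{T}$), I would appeal to the boundary clause of Definition~\ref{def:fd:ts}, which defines $T_\alpha(f)(0)=\lim_{t\to 0^+}T_\alpha(f)(t)$; since the previous two cases show this function is identically zero on $(0,\infty)\cap\mathbb{T}^\kappa$, the limit is $0$ as well. There is no real obstacle here: the argument is a case split, and the only thing worth being careful about is not forgetting the $t=0$ case, which is handled separately in the definition rather than by parts (ii)–(iii) of Theorem~\ref{T1}.
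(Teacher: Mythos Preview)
Your proof is correct and follows essentially the same approach as the paper: a case split into right-scattered and right-dense points via Theorem~\ref{T1}(ii) and (iii). The paper's proof is slightly terser and omits the separate treatment of $t=0$, which you handle explicitly via the boundary clause of Definition~\ref{def:fd:ts}.
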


\begin{proof}
If $t$ is right-scattered, then by Theorem~\ref{T1} (ii) one has
$T_{\alpha}(f)(t)=\frac{f(\sigma(t))-f(t)}{\mu(t)}t^{1-\alpha}=0$.
Otherwise, $t$ is right-dense and, by Theorem~\ref{T1} (iii),
$T_{\alpha}(f)(t) = \lim_{s \rightarrow t}\frac{c-c}{t-s}t^{1-\alpha} = 0$.
\end{proof}

\begin{proposition}
\label{E1:ii}
If $f:\mathbb{T}\rightarrow \mathbb{R}$ is defined by $f(t)=t$
for all $t\in \mathbb{T}$, then
\begin{equation*}
T_{\alpha}(f)(t) = (t)^{(\alpha)} =
\begin{cases}
t^{1-\alpha } & \textrm{ if } \alpha \neq 1, \\
1 & \textrm{ if } \alpha =1.
\end{cases}
\end{equation*}
\end{proposition}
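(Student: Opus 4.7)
My plan is to imitate the proof of Proposition~\ref{E1:i} and simply invoke the two constructive formulas (ii) and (iii) from Theorem~\ref{T1}, splitting on whether $t$ is right-scattered or right-dense. Both cases collapse immediately because $f(t)=t$ makes the difference quotient equal to $1$.

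First I would treat the case $t>0$. If $t$ is right-scattered, Theorem~\ref{T1}(ii) gives
\[
T_\alpha(f)(t)=\frac{\sigma(t)-t}{\mu(t)}\,t^{1-\alpha}=\frac{\mu(t)}{\mu(t)}\,t^{1-\alpha}=t^{1-\alpha}.
\]
If $t$ is right-dense, Theorem~\ref{T1}(iii) gives
\[
T_\alpha(f)(t)=\lim_{s\to t}\frac{t-s}{t-s}\,t^{1-\alpha}=t^{1-\alpha}.
\]
In either sub-case the conclusion $T_\alpha(f)(t)=t^{1-\alpha}$ holds, and substituting $\alpha=1$ yields $T_1(f)(t)=t^0=1$, recovering the usual delta derivative of the identity on a time scale.

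Second, I would address the boundary value at $t=0$ (only relevant when $0\in\mathbb{T}^\kappa$), using the definition $T_\alpha(f)(0)=\lim_{t\to 0^+}T_\alpha(f)(t)$. For $\alpha\in\,]0,1[$ this limit equals $\lim_{t\to 0^+}t^{1-\alpha}=0$, which is consistent with the formula $t^{1-\alpha}$ evaluated at $t=0$; for $\alpha=1$ it equals $1$. Since the statement of the proposition is uniformly $t^{1-\alpha}$ (resp.\ $1$), no separate argument beyond this continuity remark is needed.

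I do not foresee any genuine obstacle: the whole proof is a direct verification using Theorem~\ref{T1}, exactly parallel to Proposition~\ref{E1:i}. The only mild subtlety is remembering to address both right-scattered and right-dense points and to mention the definition-based value at $t=0$; everything else is arithmetic.
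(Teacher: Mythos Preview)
Your proof is correct and follows essentially the same scattered/dense case split as the paper. The only minor variant is that for the right-scattered case the paper invokes Theorem~\ref{T1}(iv) (solving $\sigma(t)=t+\mu(t)t^{\alpha-1}T_\alpha(f)(t)$ for $T_\alpha(f)(t)$) rather than Theorem~\ref{T1}(ii) as you do; your choice is arguably cleaner since (ii) directly asserts differentiability, and it mirrors the proof of Proposition~\ref{E1:i} exactly as you intended.
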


\begin{proof}
From Theorem~\ref{T1} (iv), it follows that
$\sigma(t) = t + \mu(t)t^{\alpha-1} T_{\alpha}(f)(t)$,
$\mu(t) = \mu(t)t^{\alpha-1} T_{\alpha}(f)(t)$.
If $\mu(t) \ne 0$, then $T_{\alpha}(f)(t)=t^{1-\alpha}$
and the desired relation is proved.
Assume now that $\mu(t) = 0$, i.e., $\sigma(t) = t$. In this case
$t$ is right-dense and, by Theorem~\ref{T1} (iii),
$T_{\alpha}(f)(t) = \lim_{s \rightarrow t}\frac{t-s}{t-s}t^{1-\alpha}= t^{1-\alpha}$.
Therefore, if $\alpha =1$, then $T_{\alpha}(f)(t)=1$;
if $0<\alpha <1$, then $T_{\alpha}(f)(t)=t^{1-\alpha}$.
\end{proof}

Now, let us consider the two classical cases $\mathbb{T}=\mathbb{R}$
and $\mathbb{T}= h \mathbb{Z}$, $h > 0$.

\begin{corollary}
Function $f :\mathbb{R} \rightarrow \mathbb{R}$
is conformable fractional differentiable of order $\alpha$
at point $t \in \mathbb{R}$ if, and only if, the limit
$\lim_{s\rightarrow t}\frac{f(t)-f(s)}{t-s}t^{1-\alpha}$
exists as a finite number. In this case,
\begin{equation}
\label{KG:der}
T_{\alpha}(f)(t)=\lim_{s\rightarrow t}\frac{f(t)-f(s)}{t-s}t^{1-\alpha}.
\end{equation}
\end{corollary}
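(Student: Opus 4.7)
The plan is to obtain this corollary as a direct specialization of Theorem~\ref{T1}(iii) to the time scale $\mathbb{T}=\mathbb{R}$. The key observation is structural rather than computational: when $\mathbb{T}=\mathbb{R}$, the forward jump operator is the identity, so $\sigma(t)=t$ and $\mu(t)=0$ for every $t\in\mathbb{R}$. Consequently, every point of $\mathbb{R}$ is right-dense, and the conformable fractional differentiability at $t$ must be governed exclusively by the right-dense case of Theorem~\ref{T1}.

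First I would record the observation that $\sigma(t)=t$ for all $t\in\mathbb{R}$, so the hypothesis ``$t$ is right-dense'' in part (iii) of Theorem~\ref{T1} is automatically satisfied at every point. Second, I would invoke Theorem~\ref{T1}(iii) verbatim: it asserts that $f$ is conformable fractional differentiable of order $\alpha$ at a right-dense point $t$ if, and only if, the limit
\begin{equation*}
\lim_{s\to t}\frac{f(t)-f(s)}{t-s}\,t^{1-\alpha}
\end{equation*}
exists as a finite number, and in that case this limit equals $T_{\alpha}(f)(t)$. Specializing to $\mathbb{T}=\mathbb{R}$ gives precisely the equivalence and the formula \eqref{KG:der} claimed in the corollary.

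There is no real obstacle here; the corollary is essentially a restatement of Theorem~\ref{T1}(iii) under the trivial observation that the time scale $\mathbb{R}$ contains no right-scattered points. The only thing worth double-checking is that the proof of Theorem~\ref{T1}(iii) does not secretly use any property of $\mathbb{T}$ beyond right-denseness at $t$; inspection of the argument shows it does not. Hence the corollary follows in one line by specialization, and no separate computation is needed.
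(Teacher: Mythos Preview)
Your proposal is correct and follows exactly the same approach as the paper: observe that in $\mathbb{T}=\mathbb{R}$ every point is right-dense, and invoke Theorem~\ref{T1}(iii). The paper's proof is the one-line version of what you wrote.
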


\begin{proof}
Here $\mathbb{T}=\mathbb{R}$, so all points are right-dense.
The result follows from Theorem~\ref{T1} (iii).
\end{proof}

\begin{remark}
The identity \eqref{KG:der}
corresponds to the conformable derivative introduced
in \cite{KhHYS} and further studied in \cite{Abdeljawad}.
\end{remark}

\begin{corollary}
Let $h > 0$. If $f :h\mathbb{Z} \rightarrow \mathbb{R}$, then
$f$ is conformable fractional differentiable of order $\alpha$
at $t\in h\mathbb{Z}$ with
$$
T_{\alpha}(f)(t) =\frac{f(t+h)-f(t)}{h}t^{1-\alpha}.
$$
\end{corollary}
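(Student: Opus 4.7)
The plan is to reduce this corollary directly to Theorem~\ref{T1} (ii). On the time scale $\mathbb{T} = h\mathbb{Z}$ the forward jump operator satisfies $\sigma(t) = t + h$ and the graininess is $\mu(t) = h > 0$, so every point of $h\mathbb{Z}$ is right-scattered; this was already recorded in Example~\ref{ex:1}. Moreover, since every point of $h\mathbb{Z}$ is isolated, the remark following the proof of Theorem~\ref{T1} tells us that $f$ is automatically continuous at every $t \in h\mathbb{Z}$, no matter what $f$ is.

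With those two facts in hand, the hypotheses of Theorem~\ref{T1} (ii) are met at every $t \in h\mathbb{Z}$. I would therefore invoke that part of the theorem to conclude that $f$ is conformable fractional differentiable of order $\alpha$ at $t$, and
\begin{equation*}
T_\alpha(f)(t) = \frac{f(\sigma(t)) - f(t)}{\mu(t)} t^{1-\alpha}
= \frac{f(t+h) - f(t)}{h} t^{1-\alpha},
\end{equation*}
which is precisely the claimed formula.

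There is essentially no obstacle here: the corollary is a direct specialisation of Theorem~\ref{T1} (ii) to a concrete time scale where the two required hypotheses (continuity of $f$ at $t$ and right-scatteredness of $t$) are automatic. The only thing to be careful about is to cite the correct part of Theorem~\ref{T1} and to invoke the remark justifying the free continuity at isolated points, rather than imposing continuity as an extra assumption on $f$.
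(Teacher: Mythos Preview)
Your proposal is correct and follows exactly the same approach as the paper, which simply notes that all points of $h\mathbb{Z}$ are right-scattered and invokes Theorem~\ref{T1}~(ii). If anything, your version is more complete, since you explicitly justify the continuity hypothesis via the remark on isolated points, whereas the paper's two-sentence proof leaves that verification implicit.
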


\begin{proof}
Here $\mathbb{T}=h\mathbb{Z}$
and all points are right-scattered.
The result follows from Theorem~\ref{T1} (ii).
\end{proof}

Now we give an example using the time scale $\mathbb{T} = \mathbb{P}_{a,b}$,
which is a time scale with interesting applications in Biology \cite{F:C:biology}.

\begin{example}
Let $a, b > 0$ and consider the time scale ${\mathbb{P}}_{a,b}
= \displaystyle\bigcup_{k=0}^{\infty}[k(a + b),\ k(a + b) + a]$.
Then
$$
\sigma(t)=\left\{
\begin{array}{lcl}
t& & \mbox{if}\ t
\in \displaystyle\bigcup_{k=0}^{\infty} [k(a + b),\ k(a + b) + a),\\
t+b & & \mbox{if}\ t
\in \displaystyle\bigcup_{k=0}^{\infty} \{k(a + b) + a\}
\end{array} \right.
$$
and
$$
\mu(t)=\left\{ \begin{array}{lcl}
0& & \mbox{if}\ t \in
\displaystyle\bigcup_{k=0}^{\infty} [k(a + b),\ k(a + b) + a),\\
b & & \mbox{if}\ t \in
\displaystyle\bigcup_{k=0}^{\infty} \{k(a + b) + a\}.
\end{array} \right.
$$
Let $f : \mathbb{P}_{a,b} \rightarrow \mathbb{R}$ be continuous
and $\alpha \in \  ]0,1]$. It follows from Theorem~\ref{T1} that the
conformable fractional derivative of order $\alpha$ of a function
$f$ defined on $\mathbb{P}_{a,b}$ is given by
$$
T_{\alpha}(f)(t) =
\begin{cases}
\displaystyle \lim_{s \rightarrow t}
\frac{f(t)-f(s)}{(t-s)}t^{1-\alpha} & \text{ if } t \in
\displaystyle\bigcup_{k=0}^{\infty} [k(a + b),\ k(a + b) + a), \\[0.3cm]
\frac{f(t + b)-f(t)}{b}t^{1-\alpha} & \mbox{if}\ t\in
\displaystyle\bigcup_{k=0}^{\infty} \{k(a + b) + a\}.
\end{cases}
$$
\end{example}

For the conformable fractional derivative on time scales to be
useful, we would like to know formulas for the derivatives of sums,
products, and quotients of fractional differentiable functions.
This is done according to the following theorem.

\begin{theorem}
\label{T2}
Assume $f, g : \mathbb{T} \rightarrow \mathbb{R}$ are
conformable fractional differentiable of order $\alpha$.
Then,
\begin{enumerate}
\item[(i)] the sum $f+g:\mathbb{T}\rightarrow \mathbb{R}$
is conformable fractional differentiable with
$T_{\alpha}(f+g) = T_{\alpha}(f) + T_{\alpha}(g)$;

\item[(ii)] for any $\lambda \in \mathbb{R}$,
$\lambda f :\mathbb{T}\rightarrow \mathbb{R}$
is conformable fractional differentiable with
$T_{\alpha}(\lambda f)=\lambda T_{\alpha}(f)$;

\item[(iii)] if $f$ and $g$ are continuous, then
the product $f g :\mathbb{T}\rightarrow \mathbb{R}$ is conformable
fractional differentiable with
$T_{\alpha}(fg)=T_{\alpha}(f) g + (f \circ \sigma) T_{\alpha}(g)
= T_{\alpha}(f) (g\circ\sigma) + f T_{\alpha}(g)$;

\item[(iv)] if $f$ is continuous, then $1/f$
is conformable fractional differentiable with
$$
T_{\alpha}\left(\frac{1}{f}\right)
= -\frac{T_{\alpha}(f)}{f (f \circ \sigma)},
$$
valid at all points $t \in \mathbb{T}^{\kappa}$
for which $f(t)f(\sigma(t))\neq 0$;

\item[(v)] if $f$ and $g$ are continuous, then $f/g$
is conformable fractional differentiable with
$$
T_{\alpha}\left(\frac{f}{g}\right)
=\frac{T_{\alpha}(f) g - f T_{\alpha}(g)}{g (g\circ\sigma)},
$$
valid at all points $t \in \mathbb{T}^{\kappa}$
for which $g(t) g(\sigma(t))\neq 0$.
\end{enumerate}
\end{theorem}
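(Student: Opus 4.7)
The plan is to treat each item by splitting into cases according to whether the point $t \in \mathbb{T}^{\kappa}$ is right-scattered or right-dense, and in each regime to invoke the explicit representation supplied by Theorem~\ref{T1}: formula \eqref{eq:delf:dr} when $t$ is scattered and formula \eqref{eq:rhsof} when $t$ is dense. In both regimes the problem reduces to a standard algebraic identity for difference quotients, with the factor $t^{1-\alpha}$ carried along passively.

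Parts (i) and (ii) are immediate: both the difference quotient in \eqref{eq:delf:dr} and the limit in \eqref{eq:rhsof} are manifestly $\mathbb{R}$-linear in the function argument, so multiplying through by $t^{1-\alpha}$ preserves that linearity.

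For the product rule (iii), I intend to use the standard telescoping trick. At a right-scattered point, starting from
$$f(\sigma(t))g(\sigma(t)) - f(t)g(t) = f(\sigma(t))\bigl[g(\sigma(t)) - g(t)\bigr] + \bigl[f(\sigma(t)) - f(t)\bigr]g(t),$$
dividing by $\mu(t)$ and multiplying by $t^{1-\alpha}$ gives the first form $T_{\alpha}(fg) = T_{\alpha}(f)\,g + (f\circ\sigma)\,T_{\alpha}(g)$; the symmetric split, which puts the shifted argument on $g$ instead of $f$, yields the second form. At a right-dense point $\sigma(t)=t$, so the two asserted forms collapse into one, which I obtain by writing
$$\frac{f(t)g(t) - f(s)g(s)}{t-s} = f(t)\,\frac{g(t)-g(s)}{t-s} + g(s)\,\frac{f(t)-f(s)}{t-s}$$
and passing to the limit $s \to t$. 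Here the continuity hypothesis is essential: it supplies $g(s) \to g(t)$, which is what lets the two limits combine correctly.

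For (iv), I compute $(1/f)(\sigma(t)) - (1/f)(t) = -[f(\sigma(t))-f(t)]/[f(t)f(\sigma(t))]$ under the non-vanishing hypothesis; dividing by $\mu(t)$ and multiplying by $t^{1-\alpha}$ in the scattered case, and taking the limit $s \to t$ in the dense case (using continuity so that $f(s) \to f(t)$), delivers the reciprocal formula. Part (v) then follows from $f/g = f\cdot(1/g)$ by combining (iii) with (iv), after routine algebra. The main obstacle I anticipate is purely bookkeeping: ensuring that the $t^{1-\alpha}$ factors and the $\sigma$-shifts are tracked carefully so that both equivalent forms in (iii) emerge cleanly and the non-vanishing condition in (iv)--(v) is used only where needed. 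No conceptually new step is required beyond the classical Hilger time-scale proofs once Theorem~\ref{T1} is in hand.
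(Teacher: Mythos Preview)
Your argument is correct, and for parts (iii) and (v) it coincides with the paper's. The two genuine departures are in (i)--(ii) and in (iv).

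For (i) and (ii) the paper works directly from the $\epsilon$--$\delta$ Definition~\ref{def:fd:ts}, intersecting neighborhoods and combining the defining inequalities; you instead invoke the scattered/dense dichotomy of Theorem~\ref{T1} and read off linearity from the explicit formulas. Your route is shorter once Theorem~\ref{T1} is available, while the paper's has the mild advantage of not needing to verify continuity of $f+g$ as a precondition for applying Theorem~\ref{T1}(ii) at scattered points (though that continuity is immediate from Theorem~\ref{T1}(i)).

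For (iv) the paper uses the algebraic trick of applying the already-proved product rule to $f\cdot(1/f)\equiv 1$, invoking Proposition~\ref{E1:i} to get $T_\alpha(1)=0$ and then solving for $T_\alpha(1/f)$. Your direct computation via $\tfrac{1}{f(\sigma(t))}-\tfrac{1}{f(t)} = -\tfrac{f(\sigma(t))-f(t)}{f(t)f(\sigma(t))}$ is slightly longer but self-contained: it establishes differentiability of $1/f$ rather than presupposing it in order to apply (iii), which is arguably a cleaner logical order.
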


\begin{proof}
Let us consider that $\alpha \in ]0,1]$, and let us assume that $f$
and $g$ are conformable fractional differentiable at $t
\in\mathbb{T}^{\kappa}$.
$(i)$ Let $\epsilon > 0$. Then there exist neighborhoods
$\mathcal{V}_{t}$ and $\mathcal{U}_{t}$ of $t$ for which
\begin{equation*}
\left|[f(\sigma(t))-f(s)]t^{1-\alpha}
-T_{\alpha}(f)(t)\left(\sigma(t)-s\right)\right|
\leq \frac{\epsilon}{2}|\sigma(t)-s|
\quad \text{ for all } s\in \mathcal{V}_{t}
\end{equation*}
and
\begin{equation*}
\left|[g(\sigma(t))-g(s)]t^{1-\alpha}
-T_{\alpha}(g)(t)(\sigma(t)-s)\right|
\leq \frac{\epsilon}{2}|\sigma(t)-s|
\quad \text{ for all } s\in \mathcal{U}_{t}.
\end{equation*}
Let $\mathcal{W}_{t}=\mathcal{V}_{t}\cap \mathcal{U}_{t}$. Then
$\bigl|[(f+g)(\sigma(t))-(f+g)(s)]t^{1-\alpha}-\left[T_{\alpha}(f)(t)
+T_{\alpha}(g)(t)\right](\sigma(t)-s)\bigr| \leq \epsilon |\sigma(t)-s|$
for all $s\in \mathcal{W}$. Thus, $f+g$ is conformable
differentiable at $t$ and $T_{\alpha}(f+g)(t)=T_{\alpha}(f)(t)+T_{\alpha}(g)(t)$.
$(ii)$ Let $\epsilon > 0$. Then
$\left|[f(\sigma(t))-f(s)]t^{1-\alpha}-T_{\alpha}(f)(t)(\sigma(t)-s)\right|
\leq \epsilon|\sigma(t)-s|$ for all $s$ in a neighborhood
$\mathcal{V}_{t}$ of $t$. It follows that
\begin{equation*}
\left|[(\lambda f)(\sigma(t))-(\lambda f)(s)]t^{1-\alpha} -\lambda
T_{\alpha}(f)(t)(\sigma(t)-s)\right|\leq \epsilon |\lambda| \,
|\sigma(t)-s| \text{ for  all } s \in \mathcal{V}_{t}.
\end{equation*}
Therefore, $\lambda f$ is conformable fractional differentiable at
$t$ and $T_{\alpha}(\lambda f)=\lambda T_{\alpha}(f)$ holds at $t$.\\
$(iii)$ If $t$ is right-dense, then
\begin{equation*}
\begin{split}
T_{\alpha}(fg)(t) &=\lim_{s\rightarrow
t}\left[\frac{f(t)-f(s)}{t-s}t^{1-\alpha}\right]g\left( t\right)
+\lim_{s\rightarrow t}\left[\frac{g(t)-g(s)}{t-s}t^{1-\alpha}\right]f\left( s\right)\\
&= T_{\alpha}(f)(t) g(t) + T_{\alpha}(g)(t) f(t)
= T_{\alpha}(f)(t) g(\sigma(t))+T_{\alpha}(g)(t) f(t).
\end{split}
\end{equation*}
If $t$ is right-scattered, then
\begin{equation*}
\begin{split}
T_{\alpha}\left( fg\right)(t)
&=\left[\frac{f(\sigma(t))-f(t)}{\mu
(t)}t^{1-\alpha}\right]g\left(\sigma(t)\right)
+
\left[\frac{g(\sigma(t))-g(t)}{\mu (t)}t^{1-\alpha }\right]f(t)\\
&=T_{\alpha}(f)(t) g(\sigma(t))+f(t)T_{\alpha}(g)(t).
\end{split}
\end{equation*}
The other product rule formula follows by interchanging
the role of functions $f$ and $g$.
$(iv)$ We use the conformable fractional derivative of a constant
(Proposition~\ref{E1:i}) and property $(iii)$ of Theorem~\ref{T2}
(just proved): from Proposition~\ref{E1:i} we know that
$T_{\alpha}\left(f \cdot \frac{1}{f}\right)(t)=(1)^{(\alpha)}=0$.
Therefore, by (iii)
$$
T_{\alpha}\left(\frac{1}{f}\right)(t) f(\sigma(t))
+T_{\alpha}(f)(t) \frac{1}{f(t)}=0.
$$
Since we are assuming $f(\sigma(t))\neq 0$,
$T_{\alpha}\left(\frac{1}{f}\right)(t)
=-\frac{T_{\alpha}(f)(t)}{f(t) f(\sigma(t))}$.
$(v)$ We use $(ii)$ and $(iv)$ to obtain
\begin{equation*}
T_{\alpha}\left(\frac{f}{g}\right)(t)=T_{\alpha}\left(f \cdot \frac{1}{g}\right)(t)
=f(t) T_{\alpha}\left(\frac{1}{g}\right)(t)+T_{\alpha}(f)(t)\frac{1}{g(\sigma(t))}
=\frac{T_{\alpha}(f)(t)g(t)-f(t)T_{\alpha}(g)(t)}{g(t)g(\sigma(t))}.
\end{equation*}
This concludes the proof.
\end{proof}

\begin{theorem}
\label{thm:der:pf}
Let $c$ be a constant, $m \in \mathbb{N}$, and
$\alpha \in \left] 0,1\right]$.
\begin{enumerate}
\item[(i)] If $f(t) =(t-c)^{m}$, then
$T_{\alpha}(f)(t)=t^{1-\alpha} \sum_{p = 0}^{m-1}
\left(\sigma(t)-c\right)^{m-1-p}(t-c)^{p}$.
\item[(ii)] If $g(t)=\frac{1}{(t-c)^{m}}$
and $(t-c)\left(\sigma(t)-c\right) \neq 0$, then
$T_{\alpha}(g)(t)=-t^{1-\alpha} \sum_{p = 0}^{m-1}
\frac{1}{(\sigma(t)-c)^{p+1}(t-c)^{m-p}}$.
\end{enumerate}
\end{theorem}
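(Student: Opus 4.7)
My plan is to prove (i) by induction on $m$, using the product rule from Theorem~\ref{T2}(iii), and then derive (ii) as a direct consequence of (i) combined with the reciprocal rule from Theorem~\ref{T2}(iv).

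For the base case $m=1$ in part (i), the function $f(t) = t-c$ is the difference of the identity and a constant, so by linearity (Theorem~\ref{T2}(i)--(ii)) together with Propositions~\ref{E1:i} and \ref{E1:ii}, we get $T_\alpha(f)(t) = t^{1-\alpha}$, which matches the empty-sum-style formula with a single summand $(\sigma(t)-c)^0(t-c)^0 = 1$. For the inductive step, I will assume the formula for $m$ and write $(t-c)^{m+1} = (t-c)\cdot(t-c)^m$. Applying the product rule in the form $T_\alpha(uv) = T_\alpha(u)(v\circ\sigma) + u\, T_\alpha(v)$ with $u(t)=t-c$ and $v(t)=(t-c)^m$, I obtain
\begin{equation*}
T_\alpha(f_{m+1})(t) = t^{1-\alpha}(\sigma(t)-c)^m + (t-c)\, t^{1-\alpha}\sum_{p=0}^{m-1}(\sigma(t)-c)^{m-1-p}(t-c)^p.
\end{equation*}
Absorbing the $(t-c)$ factor into the sum via the index shift $p \mapsto p+1$ and merging the isolated term (which corresponds to $p=0$ after the shift's complement) yields $t^{1-\alpha}\sum_{p=0}^{m}(\sigma(t)-c)^{m-p}(t-c)^p$, as required.

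For part (ii), since $g = 1/f$ with $f(t)=(t-c)^m$, and the hypothesis $(t-c)(\sigma(t)-c)\neq 0$ guarantees $f(t)f(\sigma(t))\neq 0$, I can apply Theorem~\ref{T2}(iv) to obtain
\begin{equation*}
T_\alpha(g)(t) = -\frac{T_\alpha(f)(t)}{f(t)f(\sigma(t))} = -\frac{t^{1-\alpha}\sum_{p=0}^{m-1}(\sigma(t)-c)^{m-1-p}(t-c)^p}{(t-c)^m(\sigma(t)-c)^m}.
\end{equation*}
Dividing each summand by $(t-c)^m(\sigma(t)-c)^m$ gives the exponents $-(p{+}1)$ on $\sigma(t)-c$ and $-(m{-}p)$ on $t-c$, producing exactly the claimed formula.

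The proof is essentially routine, so I do not expect a serious obstacle; the only place that requires a little care is the index bookkeeping in the inductive step of (i), specifically verifying that the lone term $(\sigma(t)-c)^m$ and the shifted sum $\sum_{p=1}^{m}(\sigma(t)-c)^{m-p}(t-c)^p$ combine cleanly into a single sum ranging from $p=0$ to $p=m$. Once this is done, part (ii) follows almost mechanically from (i) via the reciprocal rule.
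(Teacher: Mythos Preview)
Your proposal is correct and follows essentially the same approach as the paper's own proof: induction on $m$ for part~(i) using the product rule of Theorem~\ref{T2}(iii) in the form $T_\alpha(uv)=T_\alpha(u)(v\circ\sigma)+u\,T_\alpha(v)$ with $u(t)=t-c$ and $v(t)=(t-c)^m$, and then the reciprocal rule of Theorem~\ref{T2}(iv) for part~(ii). The only difference is that you spell out the index shift in the inductive step more carefully than the paper, which simply states the combined sum directly.
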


\begin{proof}
We prove the first formula by induction. If $ m=1$, then $f(t)=t-c$
and $T_\alpha(f)(t)=t^{1-\alpha}$ holds from Propositions~\ref{E1:i}
and \ref{E1:ii} and Theorem~\ref{T2} $(i)$. Now assume that
$$
T_{\alpha}(f)(t)=t^{1-\alpha}
\sum_{p = 0}^{m-1}(\sigma(t)-c)^{m-1-p}(t-c)^{p}
$$
holds for $f(t) =(t-c)^{m}$ and let $F(t)=(t-c)^{m+1}=(t-c)f(t)$. We
use Theorem~\ref{T2} $(iii)$ to obtain
$\left(F(t)\right)^{(\alpha)}=T_{\alpha}(t-c) f(\sigma(t)) + T_\alpha(f)(t)(t-c)
=t^{1-\alpha} \sum_{p = 0}^{m}(\sigma(t)-c)^{m-p}(t-c)^{p}$.
Hence, by mathematical induction, part $(i)$ holds.
(ii) Let $g(t)=\frac{1}{(t-c)^{m}}=\frac{1}{f(t)}$.
From $(iv)$ of Theorem~\ref{T2},
\begin{equation*}
g^{(\alpha)}(t)=-\frac{T_{\alpha}(f)(t)}{f(t)f(\sigma(t))}\\
=-t^{1-\alpha}\sum_{p = 0}^{m-1}
\frac{1}{(\sigma(t)-c)^{p+1}(t-c)^{m-p}},
\end{equation*}
provided $(t-c)\left(\sigma(t)-c\right) \neq 0$.
\end{proof}

We show some examples of application of Theorem~\ref{thm:der:pf}.

\begin{example}
Let $\alpha \in \left]0,1\right]$ and $f(t)=t^{m}$.
Then $T_{\alpha}(f)(t)=t^{1-\alpha}
\displaystyle\sum_{p = 0}^{m-1}\sigma(t)^{m-1-p}t^{p}$.
Note that if $t$ is right-dense, then $T_{\alpha}(f)(t)=mt^{m-\alpha}$.
If we choose $\mathbb{T} = \mathbb{R}$ and $\alpha=1$, then we obtain
the usual derivative: $T_{1}(f)(t)= mt^{m-1} = f'(t)$.
\end{example}

\begin{example}
Let $\alpha \in \left]0,1\right]$ and $f(t)=\frac{1}{t^m}$.
Then $T_{\alpha}(f)(t)= -t^{1-\alpha}
\displaystyle\sum_{p=0}^{m-1}\frac{1}{t^{p-m}\sigma(t)^{p+1}}$.
If $t$ is right-dense, then $T_{\alpha}(f)(t)= -\frac{m}{t^{m+\alpha}}$.
Moreover, if $\alpha=1$, then we obtain $T_{1}(f)(t)= -\frac{m}{t^{m+1}}$.
\end{example}

\begin{example}
If $f(t)=(t-1)^{2}$, then
$T_{\alpha}(f)(t)=t^{1-\alpha}\left[(\sigma(t)+1)^2
+(\sigma(t)+1)(t+1)+(t+1)^2\right]$
for all $\alpha \in \left]0,1\right]$.
\end{example}

The chain rule, as we know it from the classical differential calculus,
does not hold for the conformable fractional derivative on times
scales. This is well illustrated by the following example.

\begin{example}
\label{ex:conterex:cr}
Let $\alpha \in (0,\ 1)$; $\mathbb{T} = \mathbb{N} = \{1, 2, \ldots\}$,
for which $\sigma(t)=t+1$ and $\mu(t)=1$; and $f,\ g: \mathbb{T}\rightarrow
\mathbb{T}$ be given by $f(t)=g(t)=t$. Then,
$T_{\alpha}(f\circ g)(t) \ne T_{\alpha}(f)\left(g(t)\right) T_{\alpha}(g)(t)$:
$T_{\alpha}(f\circ g)(t)=t^{1-\alpha}$,
while
$T_{\alpha}(f)\left(g(t)\right) T_{\alpha}(g)(t) = t^{2(1-\alpha)}$.
\end{example}

We can prove, however, the following result.

\begin{theorem}[Chain rule]
\label{T3}
Let $\alpha \in \left]0,1\right]$. Assume
$g:\mathbb{T}\rightarrow \mathbb{R}$ is continuous and conformable
fractional differentiable of order $\alpha$ at $t \in
\mathbb{T}^{\kappa}$, and $f:\mathbb{R}\rightarrow \mathbb{R}$ is
continuously differentiable. Then there exists $c$ in the real
interval $[t,\sigma(t)]$ with
\begin{equation}
\label{q1}
T_\alpha(f\circ g)(t)=f'(g(c)) \, T_{\alpha}(g)(t).
\end{equation}
\end{theorem}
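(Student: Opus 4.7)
The plan is to split into two cases depending on the local structure of $t \in \mathbb{T}^{\kappa}$ at its right (right-dense vs.\ right-scattered), use the characterization of $T_\alpha$ supplied by Theorem~\ref{T1} in each case, and reduce the chain rule to an application of the classical mean value theorem (MVT) to $f$ on a real interval determined by $g$.

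If $t$ is right-dense, Theorem~\ref{T1}(iii) expresses both $T_\alpha(g)(t)$ and $T_\alpha(f\circ g)(t)$ as $s\to t$ limits of standard difference quotients multiplied by $t^{1-\alpha}$. For each $s\in\mathbb{T}$ sufficiently close to $t$, the classical MVT applied to $f$ on the real interval spanned by $g(s)$ and $g(t)$ yields a point $\xi_s$ between them with $f(g(t))-f(g(s))=f'(\xi_s)(g(t)-g(s))$. Continuity of $g$ at $t$ (Theorem~\ref{T1}(i)) forces $\xi_s\to g(t)$, and continuity of $f'$ then gives $f'(\xi_s)\to f'(g(t))$; dividing by $t-s$, multiplying by $t^{1-\alpha}$, and passing to the limit yields $T_\alpha(f\circ g)(t)=f'(g(t))\,T_\alpha(g)(t)$, so the choice $c:=t$ works.

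If $t$ is right-scattered, Theorem~\ref{T1}(ii) gives
\begin{equation*}
T_\alpha(f\circ g)(t)=\frac{f(g(\sigma(t)))-f(g(t))}{\mu(t)}\,t^{1-\alpha}.
\end{equation*}
The classical MVT applied to $f$ on the real interval with endpoints $g(t)$ and $g(\sigma(t))$ produces $\xi$ between them with $f(g(\sigma(t)))-f(g(t))=f'(\xi)\bigl(g(\sigma(t))-g(t)\bigr)$, whence $T_\alpha(f\circ g)(t)=f'(\xi)\,T_\alpha(g)(t)$. It remains to realize $\xi$ as $g(c)$ for some $c$ in the real interval $[t,\sigma(t)]$: the standard device is to extend $g$ linearly across the gap $(t,\sigma(t))$, yielding a continuous function on $[t,\sigma(t)]$ with the prescribed boundary values $g(t),g(\sigma(t))$, and then invoke the intermediate value theorem to obtain such a $c$. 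The degenerate subcase $g(t)=g(\sigma(t))$ is trivial, both sides of \eqref{q1} vanishing and any $c$ working.

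The step I expect to be the main obstacle is precisely this last identification in the right-scattered case: because $g$ is defined only on $\mathbb{T}$, writing $\xi=g(c)$ makes literal sense only after a continuous extension of $g$ across the gap $(t,\sigma(t))$ or some comparable IVT-style device. Everything else is a routine adaptation of the delta-derivative chain rule, with the extra factor $t^{1-\alpha}$ playing a passive algebraic role throughout both cases.
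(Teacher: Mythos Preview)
Your proposal is correct and follows essentially the same route as the paper: split into right-scattered and right-dense cases, apply Theorem~\ref{T1}(ii)/(iii), invoke the classical mean value theorem for $f$, and in the right-dense case use continuity to pass to the limit with $c=t$. The one substantive difference is that you are more careful than the paper at the point you flagged: the paper simply asserts ``since $g$ is continuous, there is $c\in[t,\sigma(t)]$ with $g(c)=\xi$'' without addressing that $g$ is only defined on $\mathbb{T}$, whereas your linear-extension-plus-IVT argument makes this step rigorous.
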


\begin{proof}
Let $t \in \mathbb{T}^{\kappa}$. First we consider
$t$ to be right-scattered. In this case,
$$
T_\alpha(f\circ g)(t)=\frac{f(g(\sigma(t)))-f(g(t))}{\mu(t)}t^{1-\alpha}.
$$
If $g(\sigma(t))= g(t)$, then we get $T_\alpha(f\circ g)(t)=0$ and
$T_{\alpha}(g)(t)=0$. Therefore, \eqref{q1} holds for any $c$ in the
real interval $[t,\sigma(t)]$. Now assume that $g(\sigma(t)) \neq g(t)$.
By the mean value theorem we have
\begin{equation*}
T_{\alpha}(f\circ g)(t)
=\frac{f(g(\sigma(t)))-f(g(t))}{g(\sigma(t))-g(t)}
\cdot \frac{g(\sigma(t))-g(t)}{\mu(t)}t^{1-\alpha}\\
=f'(\xi) T_{\alpha}(g)(t),
\end{equation*}
where $\xi \in ]g(t),\ g(\sigma(t))[$. Since
$g:\mathbb{T}\rightarrow \mathbb{R}$ is continuous, there is a
$c\in[t,\sigma(t)]$ such that $g(c)=\xi$, which gives the desired
result. Now let us consider the case when $t$ is right-dense. In this case
\begin{equation*}
\begin{split}
T_\alpha(f\circ g)(t)&=\lim_{s\rightarrow
t}\frac{f(g(t))-f(g(s))}{g(t)-g(s)} \cdot
\frac{g(t)-g(s)}{t-s}t^{1-\alpha}.
\end{split}
\end{equation*}
By the mean value theorem, there exist
$\xi_{s} \in ]g(t), g(\sigma(t))[$ such that
\begin{equation*}
\begin{split}
T_\alpha(f\circ g)(t)
&=\lim_{s\rightarrow t}\left\{f'(\xi_{s})
\cdot \frac{g(t)-g(s)}{t-s}t^{1-\alpha}\right\}.
\end{split}
\end{equation*}
By the continuity of $g$, we get that
$\displaystyle\lim_{s\rightarrow t}\xi_{s}=g(t)$. Then
$T_\alpha(f\circ g)(t)=f'(g(t)) \cdot T_\alpha(g)(t)$.
Since $t$ is right-dense, we conclude that $c=t=\sigma(t)$, which
gives the desired result.
\end{proof}

\begin{example}
Let $\mathbb{T}=2^\mathbb{N}$, for which $\sigma(t) = 2t$ and
$\mu(t)=t$. $(i)$ Choose $f(t)=t^{2}$ and $g(t)=t$.
Theorem~\ref{T3} guarantees that we can find a value $c$
in the interval $[t,\sigma(t)]=[t,2t]$, such that
\begin{equation}
\label{eq:ex:cr:fc}
T_{\alpha}(f\circ g)(t)=f'(g(c)) T_{\alpha}(g)(t).
\end{equation}
Indeed, from Theorem~\ref{T1} it follows that
$T_{\alpha}(f\circ g)(t)=3t^{1-\alpha}$,
$T_{\alpha}(g)(t)=t^{1-\alpha}$, and $f'(g(c))=2c$.
Equality \eqref{eq:ex:cr:fc} leads to
$3t^{1-\alpha} = 2ct^{1-\alpha}$
and so $c=\frac{3}{2}t \in [t, 2 t]$.
$(ii)$ Now let us take $f(t)=g(t)=t^2$ for all $t\in \mathbb{T}$.
We obtain $15t^{4-\alpha}=T_{\alpha}(f\circ g)(t)
=f'(g(c)) T_{\alpha}(g)(t) = 2c^23 t^{2-\alpha}$.
Therefore, $c=\sqrt{\frac{5}{2}}t \in [t, 2t]$.
\end{example}

To end Section~\ref{sub:sec:FD}, we consider conformable
derivatives of higher-order. More precisely,
we define the conformable fractional derivative $T_{\alpha}$
for $\alpha \in (n,\ n+1]$, where $n$ is some natural number.

\begin{definition}
\label{def:hofd}
Let $\mathbb{T}$ be a time scale, $\alpha\in (n,\ n+1]$, $n \in \mathbb{N}$,
and let $f$ be $n$ times delta differentiable at $t \in \mathbb{T}^{\kappa^n}$.
We define the conformable fractional derivative of $f$ of order $\alpha$ as
$T_{\alpha}(f)(t):=T_{\alpha-n}\left(f^{\Delta^{n}}\right)(t)$.
As before, we also use the notation $(f(t))^{(\alpha)} = T_{\alpha}(f)(t)$.
\end{definition}

\begin{example}
Let $\mathbb{T} = h \mathbb{Z}$, $h > 0$, $f(t) = t^3$, and
$\alpha=2.1$. Then, by Definition~\ref{def:hofd}, we have
$T_{2.1}(f)=T_{0.1}\left(f^{\Delta^2}\right)$.
Since $\sigma(t) = t+h$ and $\mu(t)= h$,
$T_{2.1}(f)(t)=\left(t^3\right)^{(2.1)}
=(6t+6h)^{(0.1)}$.
By Proposition~\ref{E1:i} and Theorem~\ref{T2} (i) and (ii),
we obtain that $T_{2.1}(f)(t)= 6 (t)^{(0.1)}$. We conclude
from Proposition~\ref{E1:ii} that $T_{2.1}(f)(t)= 6 t^{0.9}$.
\end{example}

\begin{theorem}
Let $\alpha \in (n,\ n+1]$, $n \in \mathbb{N}$.
The following relation holds:
\begin{equation}
\label{eq:cfdRDelta}
T_\alpha(f)(t)= t^{1+n-\alpha}f^{\Delta^{1+n}}(t).
\end{equation}
\end{theorem}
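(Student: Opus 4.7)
The plan is to unwind Definition~\ref{def:hofd} and then apply Theorem~\ref{T1} to the (first-order) conformable derivative of the auxiliary function $g := f^{\Delta^{n}}$. Setting $\beta := \alpha - n$, the hypothesis $\alpha \in (n, n+1]$ gives $\beta \in (0,1]$, so Theorem~\ref{T1} is directly applicable to $g$. By Definition~\ref{def:hofd},
\begin{equation*}
T_{\alpha}(f)(t) \;=\; T_{\beta}\!\left(f^{\Delta^{n}}\right)\!(t) \;=\; T_{\beta}(g)(t),
\end{equation*}
so the entire statement reduces to showing $T_{\beta}(g)(t) = t^{1-\beta}\, g^{\Delta}(t)$ whenever $g^{\Delta}(t) = f^{\Delta^{n+1}}(t)$ exists. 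The implicit regularity assumption needed for the statement to make sense is that $f$ is $(n+1)$ times delta differentiable at $t$; in particular, $g$ is then delta differentiable, hence continuous, at $t$.

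First I would split into the two standard cases for a time-scale point. If $t$ is right-scattered, Theorem~\ref{T1}(ii) applied to the continuous function $g$ yields
\begin{equation*}
T_{\beta}(g)(t) \;=\; \frac{g(\sigma(t))-g(t)}{\mu(t)}\, t^{1-\beta} \;=\; g^{\Delta}(t)\, t^{1-\beta},
\end{equation*}
by the definition of the delta derivative at a right-scattered point. If $t$ is right-dense, Theorem~\ref{T1}(iii) combined with the existence of $g^{\Delta}(t)$ (which at a right-dense point equals $\lim_{s\to t}(g(t)-g(s))/(t-s)$) gives
\begin{equation*}
T_{\beta}(g)(t) \;=\; \lim_{s\to t}\frac{g(t)-g(s)}{t-s}\, t^{1-\beta} \;=\; g^{\Delta}(t)\, t^{1-\beta}.
\end{equation*}

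In both cases we obtain $T_{\beta}(g)(t) = t^{1-\beta}\, f^{\Delta^{n+1}}(t)$, and substituting back $\beta = \alpha - n$ gives the desired identity \eqref{eq:cfdRDelta}. There is no real obstacle here: the statement is essentially a one-line consequence of Definition~\ref{def:hofd} together with the characterisations in Theorem~\ref{T1}(ii)--(iii). The only subtlety worth flagging is making sure the delta differentiability of $g = f^{\Delta^{n}}$ is invoked (so that both the right-scattered quotient and the right-dense limit can be identified with $f^{\Delta^{n+1}}(t)$), and that the value at $t=0$, if relevant, is covered by the limit convention in Definition~\ref{def:fd:ts}.
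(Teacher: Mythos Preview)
Your proof is correct and follows essentially the same route as the paper: set $\beta=\alpha-n\in(0,1]$, invoke Definition~\ref{def:hofd} to reduce to $T_{\beta}(f^{\Delta^{n}})$, and then use Theorem~\ref{T1}(ii)--(iii) to identify this with $t^{1-\beta}f^{\Delta^{n+1}}(t)$. You are simply more explicit than the paper in carrying out the right-scattered/right-dense case split and in noting the implicit assumption that $f^{\Delta^{n+1}}(t)$ exists.
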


\begin{proof}
Let $f$ be a function $n$ times delta-differentiable.
For $\alpha \in (n, n+1]$, there exist $\beta\in (0, 1]$ such that
$\alpha= n + \beta$. Using Definition~\ref{def:hofd},
$T_\alpha(f) = T_{\beta}\left(f^{\Delta^{n}}\right)$.
From the definition of (higher-order) delta derivative
and Theorem~\ref{T1} $(ii)$ and $(iii)$, it follows that
$T_\alpha(f)(t)=t^{1-\beta}\left(f^{\Delta^{n}}\right)^{\Delta}(t)$.
\end{proof}


\section{Fractional Integration}
\label{sub:sec:FI}

Now we introduce the $\alpha$-conformable fractional integral
(or $\alpha$-fractional integral) on time scales.

\begin{definition}
\label{def:int}
Let $f:\mathbb{T}\rightarrow \mathbb{R}$ be a
regulated function. Then the $\alpha$-fractional integral of $f$,
$0<\alpha \leq 1$, is defined by
$\int f(t)\Delta^{\alpha}t := \int f(t)t^{\alpha -1}\Delta t$.
\end{definition}

\begin{remark}
For $\mathbb{T} = \mathbb{R}$ Definition~\ref{def:int} reduces
to the conformable fractional integral given in \cite{KhHYS};
for $\alpha = 1$ Definition~\ref{def:int} reduces
to the indefinite integral of time scales \cite{BP}.
\end{remark}

\begin{definition}
\label{def:intFracCauchy}
Suppose $f:\mathbb{T}\rightarrow \mathbb{R}$ is a regulated function.
Denote the indefinite $\alpha$-fractional integral of $f$ of order $\alpha$,
$\alpha \in (0,\ 1]$, as follows:
$F_{\alpha}(t)=\int f(t)\Delta^{\alpha} t$.
Then, for all $a, b\in \mathbb{T}$, we define
the Cauchy $\alpha$-fractional integral by
$\int_{a}^{b}f(t)\Delta^{\alpha} t
=F_{\alpha}(b)-F_{\alpha}(a)$.
\end{definition}

\begin{example}
Let $\mathbb{T}=\mathbb{R}$, $\alpha= \frac{1}{2}$, and $f(t)=t$. Then
$\int_{1}^{10^{2/3}}f(t)\Delta^{\alpha}t=6$.
\end{example}

\begin{theorem}
\label{thm:antiderivativ}
Let $\alpha \in (0,\ 1]$. Then, for any rd-continuous function
$f:\mathbb{T}\rightarrow \mathbb{R}$, there exist a function
$F_{\alpha}:\mathbb{T}\rightarrow \mathbb{R}$ such that
$T_{\alpha}\left(F_{\alpha}\right)(t)=f(t)$
for all $t\in \mathbb{T}^\kappa$. Function $F_{\alpha}$
is said to be an $\alpha$-antiderivative of $f$.
\end{theorem}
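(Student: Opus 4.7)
The strategy is to reduce the existence of a conformable $\alpha$-antiderivative to the standard existence of a Hilger (delta) antiderivative, by exploiting the pointwise identity $T_\alpha(F)(t) = t^{1-\alpha}F^\Delta(t)$ that is already encoded in Theorem~\ref{T1}.

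\medskip

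\noindent\textbf{Step 1: translate $T_\alpha$ into a delta derivative.} For any function $F:\mathbb{T}\to\mathbb{R}$ that is delta differentiable at $t\in \mathbb{T}^\kappa$ with $t>0$, I will observe that the formulas \eqref{eq:delf:dr} and \eqref{eq:rhsof} of Theorem~\ref{T1} both factor as $T_\alpha(F)(t)= t^{1-\alpha}\,F^\Delta(t)$: in the right-scattered case \eqref{eq:delf:dr} gives $\frac{F(\sigma(t))-F(t)}{\mu(t)}\,t^{1-\alpha}$, which is $F^\Delta(t)\,t^{1-\alpha}$ by definition of $F^\Delta$; in the right-dense case \eqref{eq:rhsof} expresses the same product as a limit. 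This identity is the bridge between the two calculi.

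\medskip

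\noindent\textbf{Step 2: construct $F_\alpha$ via a Hilger antiderivative.} Given the rd-continuous function $f$, define $g:\mathbb{T}\to\mathbb{R}$ by $g(t):=f(t)\,t^{\alpha-1}$. Since $f$ is rd-continuous and $t\mapsto t^{\alpha-1}$ is continuous on $\mathbb{T}\cap(0,\infty)$, the product $g$ is rd-continuous on $\mathbb{T}^\kappa$. The standard existence theorem for antiderivatives on time scales (see, e.g., \cite{BP}) then provides a function $F_\alpha:\mathbb{T}\to\mathbb{R}$ satisfying $F_\alpha^\Delta(t)=g(t)$ for all $t\in\mathbb{T}^\kappa$. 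This is exactly the indefinite integral $F_\alpha(t)=\int f(t)\,t^{\alpha-1}\Delta t$ appearing in Definitions~\ref{def:int} and \ref{def:intFracCauchy}.

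\medskip

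\noindent\textbf{Step 3: verify the defining identity.} Applying Step~1 to $F_\alpha$, for every $t\in\mathbb{T}^\kappa$ one obtains
\begin{equation*}
T_\alpha(F_\alpha)(t) \;=\; t^{1-\alpha}\,F_\alpha^\Delta(t) \;=\; t^{1-\alpha}\cdot f(t)\,t^{\alpha-1} \;=\; f(t),
\end{equation*}
so $F_\alpha$ is the desired $\alpha$-antiderivative.

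\medskip

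\noindent\textbf{Anticipated obstacle.} The only genuine subtlety is behavior at $t=0$ when $0\in\mathbb{T}$: the weight $t^{\alpha-1}$ is singular there for $\alpha<1$, so rd-continuity of $g=f(\cdot)^{\alpha-1}$ at $0$ is not automatic. I would either restrict attention to $t>0$ (consistent with Definition~\ref{def:fd:ts}, which treats $t=0$ by a separate limit) or invoke the convention used throughout Section~\ref{sub:sec:FI} that antiderivatives are understood in the sense of Definitions~\ref{def:int}--\ref{def:intFracCauchy}, i.e., as the standard time-scale indefinite integral of $f(t)\,t^{\alpha-1}$; the identity $T_\alpha(F_\alpha)=f$ is then asserted on $\mathbb{T}^\kappa$ where $g$ is rd-continuous. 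No other step is more than a direct application of Theorem~\ref{T1} and the classical Hilger existence result.
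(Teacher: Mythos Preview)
Your proposal is correct and follows essentially the same route as the paper: define $F_\alpha$ as the delta antiderivative of $g(t)=f(t)\,t^{\alpha-1}$ (i.e., the $\alpha$-fractional integral of Definition~\ref{def:int}), then use the identity $T_\alpha(F_\alpha)(t)=t^{1-\alpha}F_\alpha^\Delta(t)$ to recover $f$. The only cosmetic difference is that the paper invokes this identity via \eqref{eq:cfdRDelta} (with $n=0$) rather than rederiving it from Theorem~\ref{T1}, and it does not pause over the $t=0$ issue you flag.
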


\begin{proof}
The case $\alpha=1$ is proved in \cite{BP}.
Let $\alpha\in (0,\ 1)$. Suppose $f$ is rd-continuous.
By Theorem~1.16 of \cite{BP1}, $f$ is regulated. Then,
$F_\alpha(t)=\int f(t)\Delta^{\alpha} t$ is conformable fractional
differentiable on $\mathbb{T}^\kappa$. Using
\eqref{eq:cfdRDelta} and Definition~\ref{def:int}, we obtain that
$T_\alpha\left(F_\alpha\right)(t)
=t^{1-\alpha} \left(F_{\alpha}(t)\right)^\Delta
=f(t)$, $t\in \mathbb{T}^\kappa$.
\end{proof}

\begin{theorem}
\label{Int-Proprty}
Let $\alpha\in (0,\ 1]$, $a, b, c \in \mathbb{T}$, $\lambda\in\mathbb{R}$,
and $f, g$ be two rd-continuous functions. Then,
\begin{enumerate}
\item[(i)] $\displaystyle\int_{a}^{b}[f(t)+g(t)]\Delta^{\alpha} t
= \displaystyle\int_{a}^{b}f(t)\Delta^{\alpha} t +
\displaystyle\int_{a}^{b}g(t)\Delta^{\alpha} t$;

\item[(ii)] $\displaystyle\int_{a}^{b}(\lambda f)(t)\Delta^{\alpha} t
= \lambda \displaystyle\int_{a}^{b}f(t)\Delta^{\alpha} t$;

\item[(iii)] $\displaystyle\int_{a}^{b}f(t)\Delta^{\alpha} t
= - \displaystyle\int_{b}^{a}f(t)\Delta^{\alpha} t$;

\item[(iv)] $\displaystyle\int_{a}^{b}f(t)\Delta^{\alpha} t
= \displaystyle\int_{a}^{c}f(t)\Delta^{\alpha} t +
\displaystyle\int_{c}^{b}f(t)\Delta^{\alpha} t$;

\item[(v)] $\displaystyle\int_{a}^{a}f(t)\Delta^{\alpha} t = 0$;

\item[(vi)] if there exist $g: \mathbb{T}\rightarrow \mathbb{R}$
with $|f(t)|\leq g(t)$ for all $t\in [a,\ b]$, then
$\left| \int_{a}^{b} f(t)\Delta^{\alpha} t\right|
\leq\int_{a}^{b} g(t)\Delta^{\alpha} t$;

\item[(vii)] if $f(t)>0$ for all $t\in [a,\ b]$, then
$\displaystyle\int_{a}^{b}f(t)\Delta^{\alpha} t\geq 0$.
\end{enumerate}
\end{theorem}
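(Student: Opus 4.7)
The plan is to observe that Definition~\ref{def:int} converts every $\alpha$-fractional integral into an ordinary delta integral with a fixed weight, namely $\int_a^b h(t)\,\Delta^\alpha t = \int_a^b h(t)\,t^{\alpha-1}\,\Delta t$. Because $t^{\alpha-1}$ is a multiplier that does not depend on the integrand, every algebraic manipulation that is legal inside a delta integral transfers verbatim. Properties (i)--(v) will therefore be one-line deductions from the standard linearity, orientation-reversal, interval-additivity, and vanishing properties of the delta integral, which are recorded in \cite{BP,BP1}. Throughout, rd-continuity of $f$ and $g$, combined with continuity of $t\mapsto t^{\alpha-1}$ on the positive portion of $\mathbb{T}$ where the theory is set, ensures that the weighted integrand remains rd-continuous, hence delta-integrable.

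More concretely, for (i) I would rewrite $(f+g)(t)\,t^{\alpha-1}=f(t)\,t^{\alpha-1}+g(t)\,t^{\alpha-1}$ and invoke linearity of $\int\cdot\,\Delta t$; for (ii) the scalar $\lambda$ factors out the same way; for (iii) the sign-flip under swapping limits of $\int\cdot\,\Delta t$ is inherited by the weighted integrand; (iv) is Chasles' relation applied to $f(t)\,t^{\alpha-1}$; and (v) is immediate from $\int_a^a\cdot\,\Delta t=0$.

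For (vi) and (vii) the one extra ingredient is the sign of the weight. Since the conformable theory developed above lives on $t>0$, the weight $t^{\alpha-1}$ is nonnegative on $[a,b]$, so $|f(t)|\leq g(t)$ implies $|f(t)|\,t^{\alpha-1}\leq g(t)\,t^{\alpha-1}$, and therefore
\[
\left|\int_a^b f(t)\,\Delta^\alpha t\right|
=\left|\int_a^b f(t)\,t^{\alpha-1}\,\Delta t\right|
\leq \int_a^b |f(t)|\,t^{\alpha-1}\,\Delta t
\leq \int_a^b g(t)\,t^{\alpha-1}\,\Delta t
=\int_a^b g(t)\,\Delta^\alpha t,
\]
using the triangle inequality and monotonicity for the delta integral. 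Property (vii) then follows as the special case in which the weighted integrand $f(t)\,t^{\alpha-1}$ is itself nonnegative, so the nonnegativity property of the delta integral closes the argument.

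No substantial obstacle is expected. The only points requiring care are keeping track of the nonnegativity of the weight $t^{\alpha-1}$ (for (vi) and (vii)) and confirming rd-continuity of the weighted integrand (so that the cited time-scale integration results from \cite{BP,BP1} actually apply), both of which are automatic in the setting of Definition~\ref{def:int}.
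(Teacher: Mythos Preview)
Your proposal is correct and follows essentially the same approach as the paper: both reduce the $\alpha$-fractional integral to the ordinary delta integral via the weight $t^{\alpha-1}$ from Definition~\ref{def:int} and then invoke the standard properties of the delta integral from \cite{BP,BP1}. Your write-up is in fact more detailed than the paper's, which simply states that the relations follow from Definitions~\ref{def:int} and \ref{def:intFracCauchy} together with the analogous delta-integral properties; your explicit observation that the nonnegativity of $t^{\alpha-1}$ is what drives (vi) and (vii) is a welcome clarification.
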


\begin{proof}
The relations follow from Definitions~\ref{def:int} and \ref{def:intFracCauchy},
analogous properties of the delta-integral, and the
properties of Section~\ref{sub:sec:FD} for the conformable
fractional derivative on time scales.
\end{proof}

\begin{theorem}
If $f: \mathbb{T}^\kappa \rightarrow \mathbb{R}$ is a
rd-continuous function and $t\in \mathbb{T}^\kappa$, then
$$
\int_t^{\sigma(t)}f(s)\Delta^{\alpha}s= f(t)\mu(t)t^{\alpha-1}.
$$
\end{theorem}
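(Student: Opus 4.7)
The plan is to prove the identity by unfolding Definition~\ref{def:int} and then reducing to the standard time-scale identity for the delta integral over a single ``cell'' $[t,\sigma(t))$.

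First I would apply Definition~\ref{def:int} directly to rewrite the conformable $\alpha$-fractional integral as an ordinary delta integral:
$$
\int_t^{\sigma(t)} f(s)\,\Delta^{\alpha} s = \int_t^{\sigma(t)} f(s)\, s^{\alpha-1}\,\Delta s.
$$
Setting $g(s) := f(s)\, s^{\alpha-1}$, I would observe that $g$ is rd-continuous on $[t,\sigma(t)]$, since $f$ is rd-continuous by hypothesis and $s \mapsto s^{\alpha-1}$ is continuous on the positive part of $\mathbb{T}$. (The possibly degenerate point $t=0$ is handled separately: if $0$ is right-dense then $\sigma(0)=0$ and $\mu(0)=0$, so both sides of the desired identity vanish trivially; otherwise one uses the same limit convention already adopted at $0$ in Definition~\ref{def:fd:ts}.)

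Next I would invoke the classical single-cell identity for the Cauchy delta integral (see Theorem~1.75 of \cite{BP}), namely $\int_t^{\sigma(t)} g(s)\,\Delta s = \mu(t)\, g(t)$ for any rd-continuous $g$. Applied to our $g(s) = f(s)\, s^{\alpha-1}$, this yields immediately
$$
\int_t^{\sigma(t)} f(s)\,\Delta^{\alpha} s = \mu(t)\, f(t)\, t^{\alpha-1} = f(t)\,\mu(t)\, t^{\alpha-1},
$$
which is exactly the claimed equality.

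There is no substantive obstacle here: the proof amounts to a definitional unfolding followed by a classical lemma from Hilger time-scale calculus. The only mild subtlety is the behaviour at $t=0$, which as indicated above is either trivial (right-dense $0$ forces $\mu(0)=0$) or subsumed by the limit convention of Definition~\ref{def:fd:ts}.
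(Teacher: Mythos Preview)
Your proof is correct, but it proceeds by a different route than the paper. You unfold Definition~\ref{def:int} to reduce the $\alpha$-fractional integral to an ordinary delta integral of $g(s)=f(s)s^{\alpha-1}$ and then invoke the classical single-cell identity $\int_t^{\sigma(t)} g(s)\,\Delta s = \mu(t)g(t)$ from \cite{BP}. The paper instead stays entirely within the conformable framework it has built: it takes an $\alpha$-antiderivative $F_\alpha$ of $f$ (furnished by Theorem~\ref{thm:antiderivativ}), writes $\int_t^{\sigma(t)} f(s)\,\Delta^\alpha s = F_\alpha(\sigma(t)) - F_\alpha(t)$ via Definition~\ref{def:intFracCauchy}, and then applies the simple useful formula of Theorem~\ref{T1}(iv) to get $\mu(t)t^{\alpha-1}T_\alpha(F_\alpha)(t) = \mu(t)t^{\alpha-1}f(t)$. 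Your argument is more elementary and self-contained, relying only on a single classical lemma; the paper's argument has the expository virtue of exercising the new tools (Theorems~\ref{T1} and~\ref{thm:antiderivativ}) just developed, at the cost of a slightly longer dependency chain. Both are equally valid.
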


\begin{proof}
Let $f$ be a rd-continuous function on $\mathbb{T}^\kappa$. Then $f$ is
a regulated function. By Definition~\ref{def:intFracCauchy} and
Theorem~\ref{thm:antiderivativ}, there exist an antiderivative
$F_\alpha$ of $f$ satisfying
\begin{equation*}
\int_t^{\sigma(t)}f(s)\Delta^{\alpha}s
=F_\alpha(\sigma(t))-F_\alpha(t)
=T_\alpha\left(F_\alpha\right)(t) \mu(t) t^{1-\alpha}
=f(t) \mu(t) t^{1-\alpha}.
\end{equation*}
This concludes the proof.
\end{proof}

\begin{theorem}
Let $\mathbb{T}$ be a time scale, $a, b \in \mathbb{T}$ with $a < b$.
If $T_\alpha(f)(t)\geq 0$ for all $ t \in [a, b] \cap \mathbb{T}$,
then $f$ is an increasing function on $[a, b] \cap \mathbb{T}$.
\end{theorem}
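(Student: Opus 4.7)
The plan is to reduce the statement to the classical monotonicity theorem on time scales: namely that if a delta-differentiable function has nonnegative delta derivative on $[a,b]\cap\mathbb{T}$, then it is nondecreasing there (see \cite{BP}). The bridge between the two settings is provided by Theorem~\ref{T1}, which for $t>0$ shows that in both the right-scattered and the right-dense case $T_\alpha(f)(t) = t^{1-\alpha} f^{\Delta}(t)$. Since $t^{1-\alpha}>0$ for $t>0$ and $\alpha\in(0,1]$, the hypothesis $T_\alpha(f)(t)\ge 0$ is equivalent to $f^{\Delta}(t)\ge 0$ on $[a,b]\cap\mathbb{T}$, and the monotonicity conclusion follows immediately.

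Alternatively, I would give a self-contained argument using the fractional integral developed in Section~\ref{sub:sec:FI}. Fix $s<r$ in $[a,b]\cap\mathbb{T}$. From $f^{\Delta}(t)=t^{\alpha-1}T_{\alpha}(f)(t)$ together with Definition~\ref{def:int}, one obtains
\[
f(r)-f(s)=\int_{s}^{r}f^{\Delta}(t)\,\Delta t
=\int_{s}^{r}T_{\alpha}(f)(t)\,t^{\alpha-1}\,\Delta t
=\int_{s}^{r}T_{\alpha}(f)(t)\,\Delta^{\alpha} t,
\]
and Theorem~\ref{Int-Proprty}(vii) makes the right-hand side nonnegative, yielding $f(s)\le f(r)$.

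The main obstacle is the possibility that $0\in[a,b]\cap\mathbb{T}$, because $T_{\alpha}(f)(0)$ is defined only through the limit $\lim_{t\to 0^{+}}T_{\alpha}(f)(t)$ rather than by the local approximation used at $t>0$; here one invokes the continuity of $f$ (Theorem~\ref{T1}(i)) and a limiting argument $a\mapsto a+\varepsilon$, $\varepsilon\to 0^{+}$, to transfer the monotonicity proved on $[a+\varepsilon,b]\cap\mathbb{T}$ up to the endpoint. A minor point is that the statement says ``increasing'' but the hypothesis $T_{\alpha}(f)(t)\ge 0$ only delivers nondecreasing monotonicity, which is what the argument above produces.
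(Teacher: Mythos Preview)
Your second, ``self-contained'' argument via the $\alpha$-fractional integral is exactly the paper's proof: one writes $f(t)=f(s)+\int_{s}^{t}T_{\alpha}(f)(\xi)\,\Delta^{\alpha}\xi$ (Definition~\ref{def:intFracCauchy}) and invokes Theorem~\ref{Int-Proprty}(vii) to conclude the integral is nonnegative. Your first argument, reducing directly to the classical time-scales monotonicity theorem through the identity $T_{\alpha}(f)(t)=t^{1-\alpha}f^{\Delta}(t)$, is a slightly different and arguably cleaner route, since it sidesteps the fractional-integral machinery of Section~\ref{sub:sec:FI} altogether and appeals only to standard results in \cite{BP}. Both approaches deliver the same (nondecreasing) conclusion; your observations about the boundary point $t=0$ and about ``increasing'' versus ``nondecreasing'' are points the paper's proof does not address.
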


\begin{proof}
Assume $T_\alpha(f)$ exist on $[a, b]\cap \mathbb{T}$ and
$T_\alpha(f)(t)\geq 0$ for all $t\in [a, b]\cap \mathbb{T}$.
Then, by (i) of Theorem~\ref{T1}, $T_\alpha(f)$ is continuous
on $[a, b]\cap \mathbb{T}$ and, therefore, by
Theorem~\ref{Int-Proprty} (vii),
$\int_{s}^{t}T_\alpha f(\xi)\Delta^{\alpha}\xi \geq 0$
for $s, t$ such that $a\leq s\leq t\leq b$.
From Definition~\ref{def:intFracCauchy},
$f(t)= f(s) + \int_{s}^{t}T_\alpha f(\xi)\Delta^{\alpha}\xi \geq f(s)$.
\end{proof}


\section{Conclusion}
\label{sec:conc}

A fractional calculus, that is, a study of differentiation and integration
of non-integer order, is here investigated via the recent and powerful
calculus on time scales. Our new calculus include, in a single theory,
discrete, continuous, and hybrid fractional calculi. In particular,
the new fractional calculus on time scales
unifies and generalizes: the Hilger calculus \cite{BP,H2},
obtained by choosing $\alpha = 1$; and the conformable
fractional calculus \cite{Abdeljawad,KhHYS,MR3326681},
obtained by choosing $\mathbb{T} = \mathbb{R}$.


\small


\section*{Acknowledgments}

This work was carried out while Nadia Benkhettou and Salima Hassani
were visiting the Department of Mathematics of University of Aveiro,
Portugal, February and March 2015. The hospitality of the host
institution and the financial support of Sidi Bel Abbes University,
Algeria, are here gratefully acknowledged. Torres was supported by
Portuguese funds through CIDMA and the Portuguese Foundation
for Science and Technology (FCT), within project UID/MAT/04106/2013.
The authors are grateful to two anonymous referees
for constructive comments and suggestions.



\end{document}